\newtheorem{theorem}{Theorem}[section]
\newtheorem{proposition}[theorem]{Proposition}
\newtheorem{lemma}[theorem]{Lemma}
\newtheorem*{theorem61}{Theorem 6.1}
\theoremstyle{definition}
\theoremstyle{remark}
\newtheorem{remark}[theorem]{Remark}
\newtheorem*{remarks*}{Remarks}
\newtheorem*{remark*}{Remark}
\numberwithin{equation}{section}
\newcommand{\RR}{{\mathbb R}}
\newcommand{\hb}{{\hbar}}
\newcommand{\trace}{\mbox{\textrm trace}}
\newcommand{\supp}{\mbox{\textrm supp}}
\newcommand{\Vol}{\mbox{\textrm Vol}}
\newcommand{\p}{\mbox{\textrm p}}
\begin{document}
\openup3pt

\title{Semiclassical Spectral Invariants for Schr\"odinger Operators}

\author{Victor Guillemin}\address{Department of Mathematics\\
Massachusetts Institute of Technology \\Cambridge, MA 02139 \\
USA}\thanks{Victor Guillemin is supported in part by NSF grant
DMS-0408993.}\email{vwg@math.mit.edu}
\author{Zuoqin Wang}
\address{Department of Mathematics\\
Johns Hopkins University \\ Baltimore, MD  21218\\
USA} \email{zwang@math.jhu.edu}

\date{\today}

\begin{abstract}
In this article we show how to compute the semi-classical
spectral measure associated with the Schr\"odinger operator on
$\mathbb R^n$, and, by examining the first few terms in the 
asymptotic expansion of this measure, obtain inverse spectral 
results in one and two dimensions. (In particular we show that for 
the Schr\"odinger operator on $\mathbb R^2$ with a radially symmetric
electric potential, $V$, and magnetic potential, $B$, both
$V$ and $B$ are spectrally determined.) We also show that in one 
dimension there is a very simple explicit identity relating the 
spectral measure of the Schr\"odinger operator with its Birkhoff canonical form.
\end{abstract}

\maketitle




\section{Introduction}
\label{sec:1}

Let
  \begin{equation}
  \label{Schr}
  S_\hb = -\frac {\hb^2}2 \Delta + V(x),
  \end{equation}
be the semi-classical Schr\"odinger operator with potential function,
$V(x) \in C^\infty(\RR^n)$, where $\Delta$ is the Laplacian operator
on $\mathbb R^n$. We will assume that $V$ is
nonnegative and that for some $a>0$, $V^{-1}([0,a])$ is compact.
By Friedrich's theorem these assumptions imply that the spectrum of $S_\hb$ on the
interval $[0,a)$ consists of a finite number of discrete eigenvalues
  \begin{equation}
  \label{eig}
  \lambda_i(\hb), \quad 1 \le i \le N(\hb),
  \end{equation}
with $N(\hb) \to \infty$ as $\hb \to 0$. We will show that for $f \in C^\infty(\mathbb R)$,
with $\supp(f) \subset (-\infty, a)$, one has an asymptotic expansion
  \begin{equation}
  \label{tr}
  (2\pi h)^{n} \sum_i f(\lambda_i(\hb)) \sim \sum_{k=0}^\infty \nu_k(f)\hb^{2k}
  \end{equation}
with principal term
  \begin{equation}
  \label{nu0}
  \nu_0(f) = \int f(\frac{\xi^2}2+V(x))\ dx\ d\xi
  \end{equation}
and subprincipal term
  \begin{equation}
  \label{nu1}
  \nu_1(f) = -\frac 1{24} \int f^{(2)}(\frac{\xi^2}2+V(x))\sum_i\frac{\partial^2 V}{\partial x_i^2}\ dx\ d\xi.
  \end{equation}
We will also give an algorithm for computing the higher order terms and will show that the $k^{th}$ term is given
by an expression of the form
  \begin{equation}
  \label{nuk}
  \nu_k(f) = \int\sum_{j=[\frac k2+1]}^k f^{(2j)}(\frac{\xi^2}2+V(x))p_{k,j}
  (DV, \cdots, D^{2k}V) dxd\xi
  \end{equation}
where $p_{k,j}$ are universal polynomials, and $D^kV$ the $k^{th}$ partial derivatives of $V$.
(We will illustrate in an appendix how this algorithm works by computing a few of these terms in the one-dimensional case.)

One way to think about the result above is to view the left hand side of (\ref{tr}) as
defining a measure, $\mu_\hb$, on the interval $[0,a)$, and the right hand side as
an asymptotic expansion of this spectral measure as $\hb \to 0$,
  \begin{equation}
  \label{mea}
  \mu_\hb \sim \sum \hb^{2k} \left(\frac d{dt}\right)^{2k} \mu_k,
  \end{equation}
where $\mu_k$ is a measure on $[0,a)$ whose singular support is the set of critical value
of the function, $V$. This ``semi-classical" spectral theorem is a special case of a
semi-classical spectral theorem for elliptic operators which we will describe in \S 2, and in
\S 3 we will derive the formulas (\ref{nu0}) and (\ref{nu1}) and the algorithm
for computing (\ref{nuk}) from this more general result. More explicitly, we'll show
that this more general result gives, more or less immediately, an expansion similar to (\ref{mea}),
but with a ``$(\frac{d}{dt})^{4k}$" in place of the $(\frac{d}{dt})^{2k}$. We'll then show how to
deduce (\ref{mea}) from this expansion by judicious integrations by parts.

In one dimension our results are closely related to recent results of \cite{Col05}, \cite{Col08}, \cite{CoG}, and \cite{Hez}.
In particular, the main result of \cite{CoG} asserts that if $c \in [0, a)$ is an isolated critical value of $V$
and $V^{-1}(a)$ is a single non-degenerate critical point, $p$, then the first two terms in (\ref{mea})
determine the Taylor series of $V$ at $p$, and hence, if $V$ is analytic in a neighborhood of $p$, determine $V$ itself in
this neighborhood of $p$. In \cite{Col08} Colin de Verdiere proves a number of much stronger variants of
this result (modulo stronger hypotheses on $V$). In particular, he shows that for a single-well potential the spectrum of
$S_\hb$ determines $V$ up to $V(x) \leftrightarrow V(-x)$ without 
any analyticity assumptions provided one makes certain asymmetry assumption on $V$. 
His proof is based on a close examination of the principal
and subprincipal terms in the ``Bohr-Sommerfeld rules to all orders" formula that he derives in \cite{Col05}. However,
we'll show in \S 4 that this result is also easily deducible from the one-dimensional versions of (\ref{nu0}) and (\ref{nu1}),
and as a second application of (\ref{nu0}) and (\ref{nu1}), we will prove in \S 5 an inverse result for symmetric double well
potentials. We will also show (by slightly generalizing a counter-example of Colin) that if one drops his asymmetry assumptions
one can construct uncountable sets, $\{V_\alpha, \alpha \in (0, 1)\}$, of single-well potentials, the $V_\alpha$'s all distinct, 
for which the $\mu_k$'s in (\ref{mea}) are the same, i.e. which are isospectral modulo $O(\hb^\infty)$.

In one dimension one can also interpret the expansion (\ref{mea}) from a somewhat different perspective. In \S 6
we will prove the following ``quantum Birkhoff canonical form" theorem:
\begin{theorem61}
If $V$ is a simple single-well potential on the interval $V^{-1}\left([0,a)\right)$ then on this interval
$S_\hb$ is unitarily equivalent to an operator of the form
  \begin{equation}
  \label{HQB}
  H_{QB}(S_\hb^{har}, \hb^2) + O(\hb^\infty)
  \end{equation}
where $S_\hb^{har}$ is the semi-classical harmonic oscillator: the 1-D Schr\"odinger operator with
potential, $V(x)=\frac {x^2}2$.
\end{theorem61}
Then in \S 7 we will show that the spectral measure, $\mu_\hb$, on the interval, $(0,a)$, is given
by
  \begin{equation}
  \label{nuh}
  \mu_\hb(f) = \int_0^a f(t) \frac{dK}{dt}(t, \hb^2)\ dt
  \end{equation}
where
  \begin{equation}
  \label{sK}
  H_{QB}(s, \hb^2) = t\; \Longleftrightarrow\; s = K(t, \hb^2).
  \end{equation}
In other words, the spectral measure determines the
Birkhoff canonical forms and vice-versa.

The last part of this paper is devoted to studying analogues of results (\ref{tr})-(\ref{mea}) in the presence of magnetic field. In this case the Schrodinger operator becomes
  \begin{equation}
  \label{magS}
  S_\hb^{(m)} = \sum_{k=1}^n (\frac {\hb}i\frac{\partial}{\partial x_k}+a_k(x))^2 + V(x)
  \end{equation}
where $\alpha = \sum a_k dx_k$ is the vector potential associated with the magnetic field and the field itself is the two form
  \begin{equation}
  \label{magf}
  B = d\alpha = \sum B_{ij} dx_i \wedge dx_j.
  \end{equation}
For the operator (\ref{magS}) the analogues of (\ref{tr})-(\ref{mea}) are still true, although the formula (\ref{nuk}) becomes considerately more complicated. We will show that the subprincipal term (\ref{nu1}) is now given by
  \begin{equation}
  \label{sub_m}
  \frac 1{48} \int f^{(2)}(\frac 12 \sum(\xi_i + a_i)^2+V(x)) (-2\sum \frac{\partial^2 V}{\partial x_k^2} +  \|B\|^2)dx d\xi.
  \end{equation}
As a result, we will show in dimension 2 that if $V$ and $B$ are radially symmetric they are spectrally determined.

\subsection*{Acknowledgement}
The results in this paper were, in large part, inspired by conversations
with Shlomo Sternberg. We would like to express to him our warmest thanks.
We would also like to express our warmest thanks to Shijun Zheng for suggestions on Schr\"odinger operators with magnetic fields.


\section{Semi-classical Trace Formula}
\label{sec:2}

Let
  \begin{equation}
  \label{Ph}
  P_{\hb} = \sum_{|\alpha| \le r} a_\alpha(x, \hb) (\hbar D_x)^\alpha
  \end{equation}
be a semi-classical differential operator on $\RR^n$, where
$a_\alpha(x, \hb) \in C^\infty(\RR^n \times \RR)$. Recall that the
Kohn-Nirenberg symbol of $P_\hb$ is
  \begin{equation}
  \label{pKN}
  p(x, \xi, \hb) = \sum_\alpha a_\alpha(x, \hb) \xi^\alpha
  \end{equation}
and its Weyl symbol is
  \begin{equation}
  \label{pW}
  p^w(x, \xi, \hb) = \exp(-\frac{i\hb}2 D_\xi \partial_x) p(x, \xi, \hb).
  \end{equation}
We assume that $p^w$ is a real-valued function, so that $P_\hb$ is
self-adjoint. Moreover we assume that for the interval $[a,b]$, $(p^w)^{-1}([a,b])$,
$0 \le \hb \le h_0$, is compact. Then by Friedrich's theorem, the spectrum of $P_\hb$, $\hb
< h_0$, on the interval $[a,b]$, consists of a finite number of eigenvalues,
  \begin{equation}
  \label{eig}
  \lambda_i(\hb), \quad 1 \le i \le N(\hb),
  \end{equation}
with $N(\hb) \to \infty$ as $\hb \to 0$. Let
  \begin{equation}
  \label{p}
  \p(x, \xi) = p(x, \xi, 0) = p^w(x, \xi, 0),
  \end{equation}
be the principal symbols of $P_\hb$.

Suppose $f \in C^\infty_0(\RR)$ is smooth and compactly supported on $(a,b)$.
Then
  \begin{equation*}
  f(P_\hb) = \frac 1{\sqrt{2\pi}} \int \hat{f}(t) e^{itP_\hb}\ dt,
  \end{equation*}
where $\hat f$ is the Fourier transform of $f$.

\begin{theorem}[\cite{GuS}]
The operator $f(P_\hb)$ is a semi-classical Fourier integral
operator. In the case $p(x, \xi, \hb)=p(x, \xi)$, i.e. $a_\alpha(x, \hb)$ are
independent of $\hb$, $f(P_\hb)$ has the left Kohn-Nirenberg symbol
  \begin{equation}
  \label{bf}
  b_f(x, \xi, \hb) \sim \sum_k \hb^k \left( \sum_{l \le 2k} b_{k,l}(x, \xi)
  \left((\frac 1i \frac d{ds})^l f\right)(\p(x, \xi))\right).
  \end{equation}
\end{theorem}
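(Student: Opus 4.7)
The plan is to construct $f(P_\hb)$ through the Fourier inversion formula
$f(P_\hb) = \frac{1}{\sqrt{2\pi}} \int \hat f(t) e^{itP_\hb}\, dt$, reducing the
problem to understanding the semi-classical propagator $U(t,\hb) := e^{itP_\hb}$ on
a compact $t$-interval containing $\supp \hat f$ (by a wavefront/localization argument $f(P_\hb)$ is microlocally supported where $\p$ lies in a compact set, since $\hat f$ may not be compactly supported but $f$ is). I would treat $U(t,\hb)$ as a one-parameter family of semi-classical pseudo\-differential operators (rather than genuine FIOs, since the factor in the exponent carries no $1/\hb$) with left Kohn--Nirenberg symbol $u(x,\xi,t,\hb)$ solving the transport equation
\begin{equation*}
\partial_t u \;=\; i\,p \# u, \qquad u|_{t=0}=1,
\end{equation*}
where $\#$ is the $\hb$-KN star product. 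Expanding $u = \sum_k \hb^k u_k$ and using the hypothesis that $p=p(x,\xi)$ is independent of $\hb$, the transport equation collapses to the recursion
\begin{equation*}
\partial_t u_k - ip\,u_k \;=\; \sum_{|\alpha|\ge 1,\,j+|\alpha|=k} \frac{i}{\alpha!}\Bigl(\frac{1}{i}\Bigr)^{|\alpha|}\partial_\xi^\alpha p \cdot \partial_x^\alpha u_{k-|\alpha|},
\end{equation*}
solvable recursively with $u_0 = e^{itp}$.

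The structural heart of the argument is the following claim, proved by induction on $k$: each $u_k$ factors as
$u_k(x,\xi,t) = e^{itp(x,\xi)}\,v_k(x,\xi,t)$, where $v_k$ is a polynomial in $t$ of degree at most $2k$ with coefficients that are smooth functions of $(x,\xi)$ expressible in derivatives of $p$. The base case $v_0 \equiv 1$ is immediate. For the inductive step, note that
$\partial_x^\alpha u_j = e^{itp}\,w_{j,\alpha}$ with $w_{j,\alpha}$ polynomial in $t$ of degree $\le 2j + |\alpha|$, since each application of $\partial_x$ brings down a factor of $it\,\partial_x p$ plus lower-$t$-degree terms. Substituting into the recursion, the right-hand side (after dividing by $e^{itp}$) has $t$-degree $\le 2(k-|\alpha|)+|\alpha| = 2k-|\alpha| \le 2k-1$, and one integration in $t$ raises the degree by exactly one, yielding $\deg v_k \le 2k$.

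With this in hand, substitute $u_k = e^{itp}\sum_{l\le 2k} b_{k,l}(x,\xi)\,t^l$ into
$b_f(x,\xi,\hb) = \frac{1}{\sqrt{2\pi}}\int \hat f(t)\,u(x,\xi,t,\hb)\,dt$
and use the elementary Fourier identity
\begin{equation*}
\frac{1}{\sqrt{2\pi}} \int \hat f(t)\,t^l\,e^{itp(x,\xi)}\,dt \;=\; \Bigl(\bigl(\tfrac{1}{i}\tfrac{d}{ds}\bigr)^l f\Bigr)(p(x,\xi)),
\end{equation*}
obtained by differentiating $f(s) = \frac{1}{\sqrt{2\pi}}\int \hat f(t)e^{its}dt$ in $s$. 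Summing over $k$ and $l\le 2k$ yields the stated asymptotic expansion \eqref{bf}, with the coefficients $b_{k,l}$ being precisely the universal polynomial expressions in derivatives of $p$ produced by solving the transport recursion. One still needs to justify that the Borel resummation of the formal series represents $f(P_\hb)$ modulo $O(\hb^\infty)$ in operator norm; this follows from standard semi-classical remainder estimates combined with the fact that $P_\hb$ is self-adjoint on $[a,b]$ where the symbol is compactly controlled, so $e^{itP_\hb}$ is bounded uniformly in $(t,\hb)$ on this spectral interval.

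The main technical obstacle is the polynomial-degree bound $l \le 2k$: this requires a careful combinatorial bookkeeping in the inductive step, tracking how each $\partial_x^\alpha$ on $u_j$ contributes $|\alpha|$ to the $t$-degree and how the $t$-integration contributes one more, while the non-contribution of the $|\alpha|=0$ term (which is absorbed into the homogeneous side $\partial_t - ip$) is what prevents the degrees from growing faster. The rest of the proof is a routine assembly of the semi-classical symbolic calculus and the Fourier inversion identity.
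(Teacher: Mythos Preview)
Your proposal is correct and follows essentially the same route as the paper (which cites \cite{GuS} for the proof and then records the resulting algorithm): the factorization $u_k=e^{itp}v_k$ turns your transport recursion into exactly the paper's iteration (2.8) for $b_m$ with the operators $Q_\alpha=\frac{1}{\alpha!}(\partial_x+it\,\partial_x p)^\alpha$, since $e^{-itp}\partial_x^\alpha(e^{itp}b)=(\partial_x+it\,\partial_x p)^\alpha b$. Your inductive degree count $\deg_t v_k\le 2k$ is precisely the ``easy to see'' claim following (2.11), and the Fourier identity you use to pass from $t^l e^{itp}$ to $\bigl(\tfrac{1}{i}\tfrac{d}{ds}\bigr)^l f$ is exactly what produces the stated form of $b_f$.
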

It follows that
  \begin{equation}
  \label{SemTr}
  \trace f(P_\hb) = \hb^{-n} \int b_f(x, \xi, \hb)\ dx d\xi + O(\hb^\infty).
  \end{equation}

The coefficients $b_{k,l}(x, \xi)$ in (\ref{bf}) can be computed as follows: Let
$Q_\alpha$ be the operator
  \begin{equation}
  \label{Qalpha}
  Q_\alpha = \frac 1{\alpha !}\left(\partial_x + i t \frac{\partial \p}
  {\partial x}\right)^\alpha.
  \end{equation}
Let $b_k(x, \xi, t)$ be defined iteratively by means of the equation
  \begin{equation}
  \label{bm}
  \frac 1i \frac{\partial b_m}{\partial t} = \sum_{|\alpha| \ge 1}
  \sum_{k+|\alpha| = m} (D_\xi^\alpha p)(Q_\alpha b_k),
  \end{equation}
with initial conditions
  \begin{equation}
  \label{b0}
  b_0(x, \xi, t) =1
  \end{equation}
and
  \begin{equation}
  \label{bm0}
  b_m(x, \xi, 0) = 0
  \end{equation}
for $m \ge 1$. Then it is easy to see that $b_k(x, \xi, t)$ is
a polynomial in $t$ of degree $2k$. The functions $b_{k,l}(x,
\xi)$ are just the coefficients of this polynomial,
  \begin{equation}
  \label{bkl}
  b_k(x,\xi,t) = \sum_{l \le 2k} b_{k,l}(x, \xi)t^l.
  \end{equation}
  %


\section{Spectral Invariants for Schr\"odinger Operators}
\label{sec:3}

Now let's compute $\trace f(S_\hb)$ for $f \in
C^\infty_0(-a, a)$ via the semi-classical trace formula
(\ref{SemTr}). Notice that from (\ref{bf}), (\ref{SemTr}) and
(\ref{b0}) it follows that the first trace invariant is
  \begin{equation*}
  \int f(p(x, \xi))\ dxd\xi,
  \end{equation*}
which implies Weyl's law, (\cite{GuS} \S 9.8), for the asymptotic distributions
of the eigenvalues (\ref{eig}).

To compute the next trace invariant, we notice that for the
Schr\"odinger operator (\ref{Schr}),
  \begin{equation}
  \label{pSchr}
  p(x, \xi, \hb) = p_0(x, \xi) = \p(x, \xi) = \frac{\xi^2}2 + V(x),
  \end{equation}
so the operator $Q_\alpha$ becomes
  \begin{equation}
  \label{QalphaSchr}
  Q_\alpha = \frac 1{\alpha !}\left(\partial_x + i t \frac{\partial V}
  {\partial x}\right)^\alpha.
  \end{equation}
It follows from (\ref{bm}) that
  \begin{equation*}
  \aligned
  \frac 1i \frac{\partial b_m}{\partial t} &= \sum_{|\alpha| \ge 1}
  \sum_{k+|\alpha|=m} D^\alpha_\xi \p Q^\alpha b_k \\
  & = \sum_k \frac{\xi_k}i \left(\frac{\partial}{\partial {x_k}}+
  it\frac{\partial V}{\partial x_k}\right)b_{m-1} - \frac 12 \sum_k
  \left(\frac{\partial}{\partial x_k} +i t \frac{\partial V}{\partial x_k}
  \right)^2 b_{m-2}.
  \endaligned
  \end{equation*}
Since $b_0(x, \xi, t)=1$ and $b_1(x, \xi, 0)=0$, we have
  \begin{equation*}
  b_1(x, \xi, t) = \frac{i t^2}2\sum_l \xi_l \frac{\partial V}{\partial x_l},
  \end{equation*}
and thus
  \begin{equation*}
  \aligned
  \frac 1i \frac{\partial b_2}{\partial t} & = \sum_k \frac{\xi_k}i
  \left(\frac{\partial}{\partial_{x_k}}+it\frac{\partial V}{\partial x_k}
  \right)(\frac{i t^2}2\sum_l \xi_l \frac{\partial V}{\partial x_l}) -
   \frac 12 \sum_k \left(\frac{\partial}{\partial x_k} +i t \frac{\partial V}
   {\partial x_k}\right)^2 (1) \\
  & = \frac{t^2}2 \sum_{k,l}\xi_k \xi_l\left(\frac{\partial^2 V}
  {\partial x_k \partial x_l} + i t \frac{\partial V}{\partial x_k}
  \frac{\partial V}{\partial x_l}\right) - \frac 12 \sum_k \left( it
  \frac{\partial^2 V}{\partial x_k^2} - t^2 \frac{\partial V}{\partial x_k}
  \frac{\partial V}{\partial x_k}\right).
  \endaligned
  \end{equation*}
It follows that
  \begin{equation}
  \label{b2}
  b_2(x, \xi, t) = \frac{t^2}4 \sum_k \frac{\partial^2 V}{\partial
  x_k^2} + \frac{i t^3}6 \left( \sum_k (\frac{\partial V}{\partial
  x_k})^2 + \sum_{k,l} \xi_k \xi_l \frac{\partial^2 V}{\partial x_k
  \partial x_l}\right) -\frac{t^4}8 \sum_{k,l} \xi_k \xi_l \frac
  {\partial V}{\partial x_k} \frac{\partial V}{\partial x_l}.
  \end{equation}
Thus the next trace invariant will be the integral
  \begin{equation}
  \label{2ndTerm}
  \aligned
  \int -& \frac 14 \sum_k \frac{\partial^2 V}{\partial
  x_k^2} f''(\frac{\xi^2}2+V(x)) - \frac 16 \sum_k (\frac{\partial V}{\partial
  x_k})^2 f^{(3)}(\frac{\xi^2}2+V(x)) \\ & - \frac 16 \sum_{k,l} \xi_k \xi_l \frac{\partial^2 V}{\partial x_k
  \partial x_l}  f^{(3)}(\frac{\xi^2}2+V(x))  -\frac 18 \sum_{k,l} \xi_k \xi_l \frac
  {\partial V}{\partial x_k} \frac{\partial V}{\partial x_l} f^{(4)}(\frac{\xi^2}2+V(x))\ dxd\xi.
  \endaligned
  \end{equation}
We can apply to these expressions the integration by parts formula,
  \begin{equation}
  \label{IBP1}
  \int \frac{\partial A}{\partial x_k} B(\frac{\xi^2}2+V(x))\ dxd\xi =
  - \int A(x)\frac{\partial V}{\partial x_k} B'(\frac{\xi^2}2 +
  V(x))\ dxd\xi
  \end{equation}
and
  \begin{equation}
  \label{IBP2}
  \int \xi_k\xi_l A(x)B'(\frac{\xi^2}2+V(x))\ dxd\xi = - \int
  \delta_k^l A(x) B(\frac{\xi^2}2 +V(x))\ dxd\xi.
  \end{equation}

Applying (\ref{IBP1}) to the first term in (\ref{2ndTerm}) we get
  \begin{equation*}
  \int \frac 14 \sum_k (\frac{\partial V}{\partial x_k})^2
  f^{(3)}(\frac{\xi^2}2+V(x))\ dxd\xi,
  \end{equation*}
and by applying (\ref{IBP2}) the fourth term in (\ref{2ndTerm})
becomes
  \begin{equation*}
  \int \frac 18 \sum_k (\frac{\partial V}{\partial x_k})^2
  f^{(3)}(\frac{\xi^2}2+V(x))\ dxd\xi.
  \end{equation*}
Finally applying both (\ref{IBP2}) and (\ref{IBP1}) the third term
in (\ref{2ndTerm}) becomes
  \begin{equation*}
  \int -\frac 16 \sum_k (\frac{\partial V}{\partial x_k})^2
  f^{(3)}(\frac{\xi^2}2+V(x))\ dxd\xi.
  \end{equation*}
So the integral (\ref{2ndTerm}) can be simplified to
  \begin{equation*}
  \frac 1{24} \int \sum_k (\frac{\partial V}{\partial
  x_k})^2 f^{(3)}(\frac{\xi^2}2+V(x))\ dxd\xi.
  \end{equation*}
We conclude
\begin{theorem}
\label{main} The first two terms of (\ref{SemTr}) are
  \begin{equation}
  \label{TwoTerm} \trace f(S_\hb) = \int f(\frac{\xi^2}2+V(x))\
  dxd\xi + \frac 1{24}\hb^2 \int \sum_k (\frac{\partial V}{\partial
  x_k})^2 f^{(3)}(\frac{\xi^2}2+V(x)) \ dxd\xi+ O(\hb^4).
  \end{equation}
\end{theorem}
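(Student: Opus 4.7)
The plan is to specialize Theorem~2.1 to the Schr\"odinger operator $S_\hb$, whose Kohn--Nirenberg and Weyl symbols both equal $\p(x,\xi)=\frac{\xi^{2}}{2}+V(x)$, and then use the iterative scheme (\ref{bm})--(\ref{bm0}) to produce $b_{0}$, $b_{1}$ and $b_{2}$ explicitly. By (\ref{SemTr}) the trace expansion is
\begin{equation*}
\trace f(S_\hb)=\hb^{-n}\int b_{f}(x,\xi,\hb)\,dxd\xi+O(\hb^{\infty}),
\end{equation*}
so it suffices to collect contributions up to the $\hb^{2}$ term on the right--hand side of (\ref{bf}). The order--$\hb^{0}$ contribution is immediate from $b_{0}=1$ and gives the leading integral $\int f(\p)\,dxd\xi$.

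Next I would dispose of the odd orders by a parity argument. Inspecting (\ref{bm}) with $Q_\alpha$ given by (\ref{QalphaSchr}), each application of the right--hand side multiplies $b_{m-1}$ by a factor $\xi_{k}$ (from $D_\xi \p=\xi_{k}$ in the $|\alpha|=1$ contribution) while the $|\alpha|=2$ contribution $(D_\xi^{2}\p)\,Q_{\alpha}b_{m-2}$ preserves $\xi$--parity. Thus $b_{m}(x,\xi,t)$ is $\xi$--even for even $m$ and $\xi$--odd for odd $m$; consequently the $\hb^{1}$ and $\hb^{3}$ contributions to $\trace f(S_\hb)$, being integrals of $\xi$--odd functions over $\RR^{n}$, vanish identically. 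This already yields the desired $O(\hb^{4})$ remainder.

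The main step is computing $b_{2}$. I would integrate (\ref{bm}) at $m=2$ with the inputs $b_{0}=1$ and $b_{1}=\tfrac{it^{2}}{2}\sum_{l}\xi_{l}\,\partial V/\partial x_{l}$, produce the formula (\ref{b2}), and then substitute into (\ref{bf}), turning each power $t^{l}$ into $(1/i)^{l}f^{(l)}(\p)$. The result is the four--term integrand (\ref{2ndTerm}). The bulk of the work is the algebraic cleanup: using the two integration--by--parts identities (\ref{IBP1}) and (\ref{IBP2}) I would convert each of the four terms to a multiple of $\sum_{k}(\partial V/\partial x_{k})^{2}f^{(3)}(\p)$. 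Schematically, (\ref{IBP1}) converts $\partial^{2}V\cdot f''(\p)$ into $-(\partial V)^{2}f^{(3)}(\p)$, while (\ref{IBP2}) converts $\xi_{k}\xi_{l}A(x)f^{(j+1)}(\p)$ into $-\delta_{kl}A(x)f^{(j)}(\p)$; applying these term by term produces the four rational coefficients $\tfrac14,-\tfrac16,-\tfrac16,\tfrac18$ whose sum is precisely $\tfrac{1}{24}$.

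The main obstacle, and really the only non--mechanical point in the argument, is to perform the algebraic bookkeeping carefully: keep track of the powers of $i$ introduced by the substitution $t\mapsto \frac{1}{i}\frac{d}{ds}$, of the signs produced by the successive integrations by parts, and of the symmetry $k\leftrightarrow l$ in the double sums coming from the mixed derivatives. Once these signs and factors are combined correctly, the cancellation to the common form $\tfrac{1}{24}\sum_{k}(\partial V/\partial x_{k})^{2}f^{(3)}(\p)$ is forced, yielding (\ref{TwoTerm}).
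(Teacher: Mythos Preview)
Your proposal is correct and follows essentially the same route as the paper: specialize the general trace formula to $S_\hb$, compute $b_1$ and $b_2$ via the recursion (\ref{bm}), plug into (\ref{bf})--(\ref{SemTr}), and reduce the resulting four-term integral (\ref{2ndTerm}) to the single $\tfrac{1}{24}$ term using the integration-by-parts identities (\ref{IBP1}) and (\ref{IBP2}). The one thing you add that the paper leaves implicit is the $\xi$-parity argument showing that the $\hb^{1}$ and $\hb^{3}$ contributions integrate to zero; the paper simply exhibits $b_1$ (visibly odd in $\xi$) and never discusses $b_3$ in the body, so your observation is a welcome clarification rather than a different method.
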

In deriving (\ref{TwoTerm}) we have assumed that $f$ is compactly supported. However,
since the spectrum of $S_\hb$ is bounded from below by zero the left and right hand
sides of (\ref{TwoTerm}) are unchanged if we replace the ``$f$" in (\ref{TwoTerm}) by
\emph{any} function, $f$, with support on $(-\infty, a)$, and, as a consequence of this remark,
it is easy to see that the following two integrals,
  \begin{equation}
  \label{1st}
  \int_{\frac{\xi^2}2+V(x) \le \lambda} \ dxd\xi
  \end{equation}
and
  \begin{equation}
  \label{2nd}
  \int_{\frac{\xi^2}2+V(x) \le \lambda} \sum_k (\frac{\partial V}{\partial x_k})^2 dxd\xi
  \end{equation}
are spectrally determined by the spectrum (\ref{eig}) on the interval $[0, a]$. Moreover, from
(\ref{TwoTerm}), one reads off the Weyl law: For $0<\lambda<a$,
  \begin{equation}
  \label{Weyl}
  \#\{\lambda_i(\hb) \le \lambda \}  = (2 \pi \hb)^{-n}\left( \Vol(\frac{\xi^2}2+V(x)\le \lambda) + O(\hb)\right).
  \end{equation}

We also note that the second term in the formula (\ref{TwoTerm}) can, by (\ref{IBP2}), be written in the form
  \begin{equation*}
  \frac 1{24} \hb^2 \int \sum_{k} \frac{\partial^2 V}{\partial x_k^2} f^{(2)}(\frac{\xi^2}2+V(x))\ dxd\xi
  \end{equation*}
and from this one can deduce an $\hb^2$-order ``cumulative shift to the left" correction to the Weyl law.

\subsection{Proof of (\ref{nuk})}

To prove (\ref{nuk}), we notice that for $m$ even, the lowest
degree term in the polynomial $b_m$ is of degree $\frac m2 + 1$, thus we can write
  \begin{equation*}
  b_m = \sum_{l=-\frac m2+1 }^{m} b_{m,l}t^{m+l}.
  \end{equation*}
Putting this into the the iteration formula, we will get
  \begin{equation*}
  \aligned
  \frac {m+l}{i}b_{m,l} = & \sum \frac{\xi_k}{i}\frac{\partial b_{m-1,l}}{\partial x_k} + \sum \xi_k \frac{\partial V}{\partial x_k} b_{m-1, l-1} -\frac 12 \sum \frac{\partial^2 b_{m-2,l+1}}{\partial x_k^2} \\
  & -\frac i2 (\frac{\partial}{\partial x_k}\frac{\partial V}{\partial x_k} +\frac{\partial V}{\partial x_k}\frac{\partial}{\partial x_k})b_{m-2,l} +\frac 12 \sum (\frac{\partial V}{\partial x_k})^2 b_{m-2,l-1},
  \endaligned
  \end{equation*}
from which one can easily conclude that for $l \ge 0$,
  \begin{equation}
  \label{bml}
  b_{m,l} = \sum \xi^\alpha (\frac{\partial V}{\partial x})^\beta p_{\alpha, \beta}(DV, \cdots, D^mV)
  \end{equation}
where $p_{\alpha, \beta}$ is a polynomial, and $|\alpha|+|\beta| \ge 2l-1$. It follows that, by applying the integration by parts formula (\ref{IBP1}) and (\ref{IBP2}), all the $f^{(m+l)}$, $l \ge 0$, in the integrand of the $\hb^n$th term in the expansion (\ref{bf}) can be replaced by $f^{(m)}$. In other words, only derivatives of $f$ of degree $\le 2k$ figure in the expression for $\nu_k(f)$. For those terms involving derivatives of order less than $2k$, one can also use integration by parts to show that each $f^{(m)}$ can be replaced by a $f^{(m+1)}$ and a $f^{(m-1)}$. In particular, we can replace all the odd derivatives by even derivatives. This proves (\ref{nuk}).


\section{Inverse Spectral Result: Recovering the Potential Well}
\label{sec:4}

Suppose $V$ is a ``potential well", i.e. has a unique
nondegenerate critical point at $x=0$ with minimal value $V(0)=0$,
and that $V$ is increasing for $x$ positive, and decreasing for $x$
negative. For simplicity assume in addition that
  \begin{equation}
  \label{der}
  -V'(-x) > V'(x)
  \end{equation}
holds for all $x$. We will show how to use the spectral invariants
(\ref{1st}) and (\ref{2nd}) to recover the potential function
$V(x)$ on the interval $|x|<a$.

\begin{figure}[h]
\centering
  \setlength{\unitlength}{0.0335 mm}%
  \begin{picture}(3755.6, 1434.7)(0,0)
  \put(0,0){\includegraphics[width=1\columnwidth]{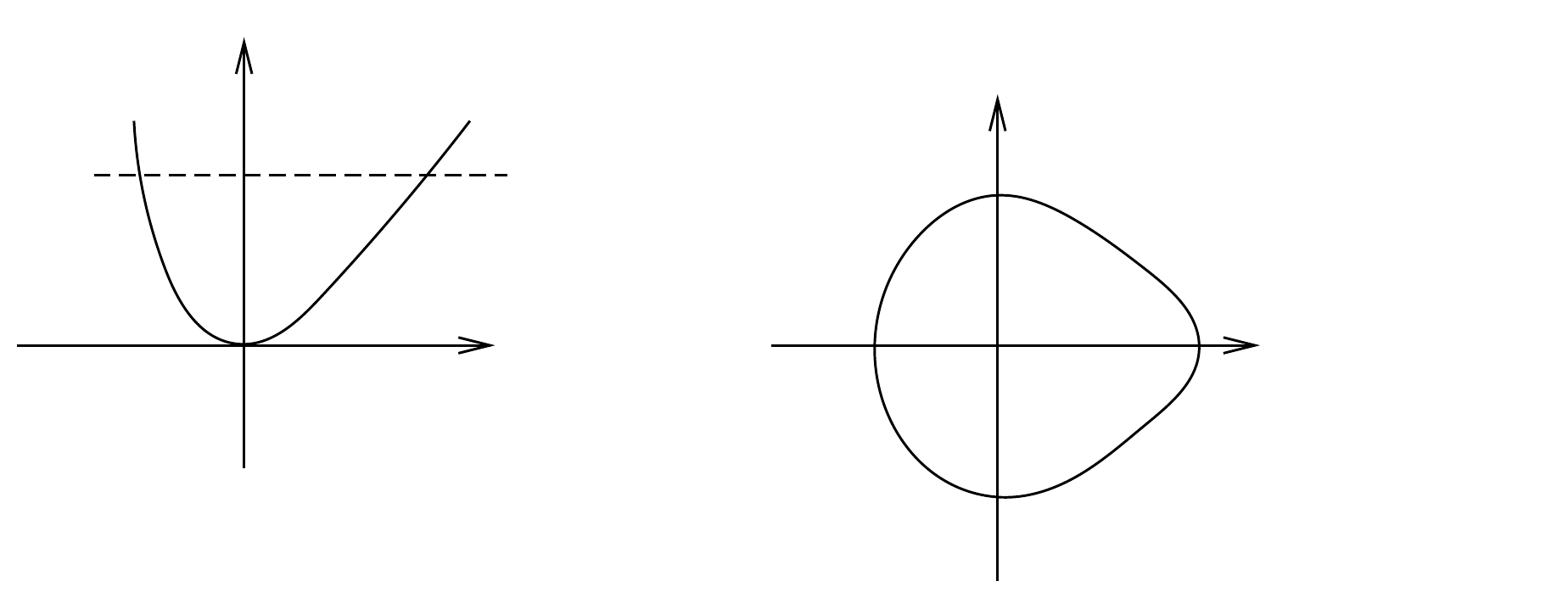}}
  \put(608.46,1302.02){\fontsize{12.80}{15.36}\selectfont \makebox(135.0, 90.0)[l]{$y$\strut}}
  \put(1171.84,500.86){\fontsize{12.80}{15.36}\selectfont \makebox(135.0, 90.0)[l]{$x$\strut}}
  \put(2967.91,487.00){\fontsize{12.80}{15.36}\selectfont \makebox(135.0, 90.0)[l]{$x$\strut}}
  \put(2413.15,1199.29){\fontsize{12.80}{15.36}\selectfont \makebox(225.0, 90.0)[l]{$\xi$\strut}}
  \put(2707.65,832.51){\fontsize{10.24}{12.29}\selectfont \makebox(1008.0, 72.0)[l]{$\frac{\xi^2}2+V(x)=\lambda$\strut}}
  \put(1128.98,1164.68){\fontsize{10.24}{12.29}\selectfont \makebox(288.0, 72.0)[l]{$y=V(x)$\strut}}
  \put(618.73,1020.85){\fontsize{10.24}{12.29}\selectfont \makebox(324.0, 72.0)[l]{$\lambda$\strut}}
  \put(1854.75,525.25){\fontsize{10.24}{12.29}\selectfont \makebox(540.0, 72.0)[l]{$-x_2(\lambda)$\strut}}
  \put(2660.31,521.55){\fontsize{10.24}{12.29}\selectfont \makebox(504.0, 72.0)[l]{$x_1(\lambda)$\strut}}
  \put(2207.68,692.10){\fontsize{10.24}{12.29}\selectfont \makebox(180.0, 72.0)[l]{$A_2$\strut}}
  \put(2498.76,678.41){\fontsize{10.24}{12.29}\selectfont \makebox(180.0, 72.0)[l]{$A_1$\strut}}
  \end{picture}%
\caption{\label{SWP}Single Well Potential}
\end{figure}

For $0<\lambda<a$ we let $-x_2(\lambda)<0<x_1(\lambda)$ be the
intersection of the curve $\frac{\xi^2}2+V(x) = \lambda$ with the
$x$-axis on the $x-\xi$ plane. We will denote by $A_1$ the region in
the first quadrant bounded by this curve, and by $A_2$ the region in
the second quadrant bounded by this curve. Then from (\ref{1st})
and (\ref{2nd}) we can determine
  \begin{equation}
  \label{1st2}
  \int_{A_1}+ \int_{A_2}\  dxd\xi
  \end{equation}
and
  \begin{equation}
  \label{2nd2}
  \int_{A_1}+ \int_{A_2}\ V'(x)^2 dxd\xi.
  \end{equation}
Let $x=f_1(s)$ be the inverse function of $s=V(x), x \in (0,a)$.
Then
  \begin{equation*}
  \aligned
  \int_{A_1} V'(x)^2\ dxd\xi
  & = \int_0^{x_1(\lambda)} V'(x)^2
      \int_0^{\sqrt{2(\lambda-V(x))}}d\xi dx \\
  & = \int_0^{x_1(\lambda)} V'(x)^2 \sqrt{2\lambda-2V(x)}\ dx \\
  & = \int_0^\lambda \sqrt{2\lambda - 2s }V'(f_1(s))\ ds \\
  & = \int_0^\lambda \sqrt{2\lambda - 2s} \left(\frac{df_1}{ds}\right)^{-1} ds.
  \endaligned
  \end{equation*}
Similarly
  \begin{equation*}
  \int_{A_2} V'(x)^2\ dxd\xi = \int_0^\lambda \sqrt{2\lambda - 2s}
  \left(\frac{df_2}{ds}\right)^{-1} \ ds,
  \end{equation*}
where $x=f_2(s)$ is the inverse function of $s=V(-x), x \in (0,
a)$. So the spectrum of $S_\hb$ determines
  \begin{equation}
  \label{2nd3}
  \int_0^\lambda \sqrt{\lambda  - s}
  \left((\frac{df_1}{ds})^{-1}+(\frac{df_2}{ds})^{-1}\right)\ ds.
  \end{equation}
Similarly the knowledge of the integral (\ref{1st2}) amounts to
the knowledge of
  \begin{equation}
  \label{1st3}
  \int_0^\lambda \sqrt{\lambda  - s}
  \left(\frac{df_1}{ds}+\frac{df_2}{ds}\right)\ ds.
  \end{equation}
Recall now that the fractional integration operation of Abel,
  \begin{equation}
  \label{FracInt}
  J^a g(\lambda) = \frac 1{\Gamma(a)}\int_0^\lambda (\lambda -t)^{a-1} g(t)\ dt
  \end{equation}
for $a>0$ satisfies $J^a J^b = J^{a+b}$. Hence if we apply $J^{1/2}$ to the
expression (\ref{1st3}) and (\ref{2nd3}) and then differentiate by $\lambda$ two times we recover
$\frac{df_1}{ds}+\frac{df_2}{ds}$  and
$(\frac{df_1}{ds})^{-1}+(\frac{df_2}{ds})^{-1}$ from the spectral
data. In other words, we can determine $f_1'$ and $f_2'$ up to the ambiguity $f_1'
\leftrightarrow f_2'$.

However, by (\ref{der}), $f_1'>f_2'$. So we can from the above determine $f_1'$ and $f_2'$, and hence $f_i, i=1,2.$ So we conclude
\begin{theorem}
\label{InvSpe} Suppose the potential function $V$ is a potential
well, then the semi-classical spectrum of $S_\hb$ modulo $o(\hb^2)$
determines $V$ near $0$ up to $V(x) \leftrightarrow V(-x)$.
\end{theorem}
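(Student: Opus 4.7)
My plan is to use the two leading spectral invariants \eqref{1st}--\eqref{2nd}, convert them to Abel-type integrals in the energy variable, Abel-invert to extract the unordered pair of derivatives of the two branches of $V^{-1}$, and then invoke the asymmetry hypothesis \eqref{der} to identify which branch is which.

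The first step is to pass from integrals on the $(x,\xi)$-plane to integrals in $s$. The sublevel set $\{\tfrac{\xi^2}{2}+V(x)\le\lambda\}$ splits along $x=0$ into the regions $A_1$ and $A_2$ of the figure; carrying out the $\xi$-integration on each half produces a factor $\sqrt{2\lambda-2V(x)}$, and the changes of variable $s=V(x)$ on the right half and $s=V(-x)$ on the left half, with inverses $f_1$ and $f_2$, turn \eqref{1st}--\eqref{2nd} into Abel-type integrals of $f_1'(s)+f_2'(s)$ and of $f_1'(s)^{-1}+f_2'(s)^{-1}$ against $\sqrt{2\lambda-2s}$, exactly as in \eqref{1st3}--\eqref{2nd3}.

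The core of the argument is the Abel inversion. Using the operator of \eqref{FracInt} together with its semigroup property $J^{1/2}J^{1/2}=J^{1}$, I apply $J^{1/2}$ to each integral and differentiate twice in $\lambda$; this recovers
\[
\sigma(s)=f_1'(s)+f_2'(s),\qquad \tau(s)=f_1'(s)^{-1}+f_2'(s)^{-1},
\]
from which $f_1'(s)f_2'(s)=\sigma(s)/\tau(s)$, so the unordered pair $\{f_1'(s),f_2'(s)\}$ is the set of roots of the quadratic $t^{2}-\sigma(s)\,t+\sigma(s)/\tau(s)=0$. The hypothesis \eqref{der} states that $V$ rises more steeply to the left than to the right, which forces $f_2'(s)<f_1'(s)$ for every $s\in(0,a)$; this breaks the symmetry and identifies each derivative individually. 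Integrating from $0$ recovers $f_1$ and $f_2$, and hence $V$ near the origin up to the involution $V(x)\leftrightarrow V(-x)$.

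The point I expect to be most delicate is the behaviour at $s=0$. Near the nondegenerate minimum one has $f_i(s)\sim c_i\sqrt{s}$, so $f_i'(s)$ blows up like $s^{-1/2}$ and the Abel inversion has to be handled with care; the cleanest fix is to work with the smooth squares $f_i^{2}$ rather than with $f_i'$ directly. One also has to check that the ``spectrum modulo $o(\hbar^{2})$'' actually determines \eqref{1st} and \eqref{2nd} for every $\lambda<a$, not merely against test functions of compact support, but this is guaranteed by \eqref{TwoTerm} combined with the fact that $S_\hbar$ is bounded below by zero, as remarked just after \eqref{TwoTerm}.
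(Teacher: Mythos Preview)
Your proposal is correct and follows essentially the same route as the paper: reduce the two spectral invariants \eqref{1st}--\eqref{2nd} to the Abel integrals \eqref{1st3}--\eqref{2nd3} via the branch inverses $f_1,f_2$, invert by $J^{1/2}$ plus two $\lambda$-derivatives to recover $f_1'+f_2'$ and $(f_1')^{-1}+(f_2')^{-1}$, and use the asymmetry hypothesis \eqref{der} to order the pair. Your explicit quadratic $t^2-\sigma t+\sigma/\tau=0$ and your cautionary remark about the $s^{-1/2}$ blow-up of $f_i'$ at $s=0$ are welcome elaborations, but they do not change the argument.
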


\begin{remark}
The hypothesis (\ref{der}) or some ``asymmetry" condition similar to it is necessary for the theorem above to be true. 
To see this we note that since $V(x)$ and $V(-x)$ have the same spectrum the integrals in (\ref{nuk}) have to be 
invariant under the involution, $x \to -x$. (This is also easy to see directly from the algorithm (\ref{bm}).) Now let
$V: \mathbb R \to \mathbb R$ be a single well potential satisfying $V(0)=0$, $V(x) \to +\infty$ as $x \to \pm \infty$ and
\[
(a) \qquad \qquad V(-x)=V(x) \mbox{\ for\ }k \ge 0 \mbox{\ and\ }2k \le x \le 2k+1
\]
and
\[
(b) \qquad \qquad V(-x)<V(x) \mbox{\ for\ }k \ge 0 \mbox{\ and\ }2k+1 < x < 2k+2.
\]
Now write the integral (\ref{nuk}) as a sum
\begin{equation}
\label{sum}
\sum_k \int_{I_k} + \int_{-I_k},
\end{equation}
where $I_k$ is the set, $\{(x, \xi), k \le x \le k+1\}$, and for $\alpha \in (0,1)$ having
the binary expansion $.a_1a_2a_3\cdots$, $a_i=0$ or $1$, let $V_\alpha$ be the potential
\[
V_\alpha(x)=V(x) \mbox{\ on\ } 2k<x<2k+1 \mbox{\ if\ } a_k=0
\]
and
\[
V_\alpha(x)=V(-x) \mbox{\ on\ } 2k<x<2k+1 \mbox{\ if\ } a_k=1.
\]
In view of the remark above the summations (\ref{sum}) are unchanged if we replace $V$ by $V_\alpha$.
\end{remark}

\begin{remark}
The formula (\ref{1st3}) can be used to construct lots of Zoll potentials, i.e. potentials for which the Hamiltonian flow $v_H$ associated with  $H=\xi^2 + V(x)$ is periodic of period $2\pi$. It's clear that the potential $V(x)=x^2$ has this property and is the only even potential with this property. However, by (\ref{1st3}) and the area-period relation (See Proposition 6.1) every single-well potential $V$ for which 
\[f_1(s)+f_2(s)=2 s^{1/2}\] 
has this property. We will discuss some implications of this in a sequel to this paper.
\end{remark}


\section{Inverse Spectral Result: Recovering Symmetric Double Well Potential}

We can also use the spectral invariants above to recover double-well potentials. Explicitly, suppose $V$ is a symmetric double-well potential, $V(x)=V(-x)$, as shown in the below graph. Then $V$ is defined by two functions $V_1$, $V_2$:

\begin{figure}[h]
\centering
  \setlength{\unitlength}{0.05 mm}%
  \begin{picture}(2249.9, 710.3)(0,0)
  \put(0,0){\includegraphics{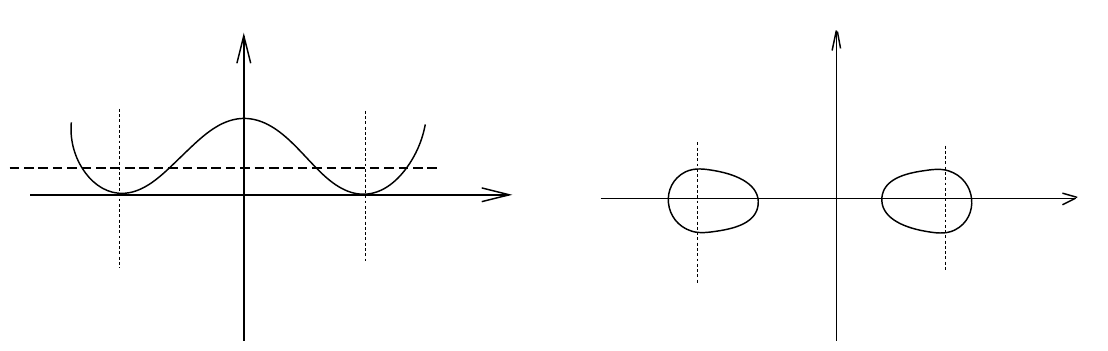}}
  \put(251.01,265.15){\fontsize{5.56}{6.67}\selectfont \makebox(78.1, 39.1)[l]{$-a$\strut}}
  \put(742.21,267.78){\fontsize{5.56}{6.67}\selectfont \makebox(58.6, 39.1)[l]{$a$\strut}}
  \put(1423.81,268.86){\fontsize{5.56}{6.67}\selectfont \makebox(78.1, 39.1)[l]{$-a$\strut}}
  \put(1870.43,265.30){\fontsize{5.56}{6.67}\selectfont \makebox(58.6, 39.1)[l]{$a$\strut}}
  \put(987.76,252.62){\fontsize{5.56}{6.67}\selectfont \makebox(58.6, 39.1)[l]{$x$\strut}}
  \put(2171.81,259.79){\fontsize{5.56}{6.67}\selectfont \makebox(58.6, 39.1)[l]{$x$\strut}}
  \put(1710.20,650.54){\fontsize{5.56}{6.67}\selectfont \makebox(97.7, 39.1)[l]{$\xi$\strut}}
  \put(286.50,393.45){\fontsize{5.56}{6.67}\selectfont \makebox(195.3, 39.1)[l]{$y=V_1(x)$\strut}}
  \put(862.95,405.88){\fontsize{5.56}{6.67}\selectfont \makebox(195.3, 39.1)[l]{$y=V_2(x)$\strut}}
  \put(499.04,376.73){\fontsize{5.56}{6.67}\selectfont \makebox(175.8, 39.1)[l]{$\lambda$\strut}}
  \put(1500.36,378.24){\fontsize{5.56}{6.67}\selectfont \makebox(546.9, 39.1)[l]{$\frac{\xi^2}2+V(x)=\lambda$\strut}}
  \put(515.67,629.48){\fontsize{5.56}{6.67}\selectfont \makebox(58.6, 39.1)[l]{$y$\strut}}
  \end{picture}%
\caption{Double Well Potential}
\end{figure}

As in the single well potential case, let $f_1$, $f_2$ be the inverse functions
  \begin{equation*}
  x=f_1(s) \; \Longleftrightarrow \; s=V_1(x+a)
  \end{equation*}
and
  \begin{equation*}
  x=f_2(s) \; \Longleftrightarrow \; s=V_2(x-a).
  \end{equation*}
For $\lambda$ small, the region $\{(x,\xi)\ |\ \frac{\xi^2}2+V(x) \le \lambda \}$ is as indicated in figure 2, so if we
apply the same argument as in the previous section, we recover from the area of this region the
sum $\frac{df_1}{ds}+\frac{df_2}{ds}$ via Abel's integral. Similarly from the spectral invariant
$\int_{\frac{\xi^2}2+V(x) \le \lambda} (V')^2\ dxd\xi$ we recover the sum $(\frac{df_1}{ds})^{-1} + (\frac{df_2}{ds})^{-1}$.
As a result, we can determine $V_1$ and $V_2$ modulo an asymmetry condition such as (\ref{der}).

The same idea also shows that if $V$ is decreasing on $(-\infty, -a)$ and is increasing on $(b, \infty)$, and that $V$ is known on $(-a,b)$, then we can recover $V$ everywhere. In particular, we can weaken the symmetry condition on double well potentials: if $V$ is a double well potential, and is symmetric on the interval $V^{-1}[0, V(0)]$, then we can recover $V$.


\section{The Birkhoff Canonical Form Theorem for the 1-D Schr\"odinger Operator}

Suppose that $V^{-1}\left([0,a]\right)$ is a closed interval, $[c,d]$, with $c<0<d$ and
$V(0)=0$. Moreover suppose that on this interval, $V''>0$. We will show below that there exists
a semi-classical Fourier integral operator,
  \begin{equation*}
  \mathcal U: C_0^\infty(\RR) \to C^\infty(\RR)
  \end{equation*}
with the properties
  \begin{equation}
  \label{uf}
  \mathcal U f(S_\hb) \mathcal U^t = f(H_{QB}(S_\hb^{har}, \hb)) + O(\hb^\infty)
  \end{equation}
for all $f \in C_0^\infty\left((-\infty, a)\right)$ and
  \begin{equation}
  \label{uA}
  \mathcal U \mathcal U^t A = A
  \end{equation}
for all semi-classical pseudodifferential operators with microsupport on $H^{-1}\left([0,a)\right)$.
To prove these assertions we will need some standard facts about Hamiltonian systems in two dimensions:
With $H(x, \xi) = \frac{\xi^2}2+V(x)$ as above, let $v=v_H$ be the Hamiltonian vector field
  \begin{equation*}
  v_H = \frac{\partial H}{\partial \xi} \frac{\partial}{\partial x} - \frac{\partial H}{\partial x} \frac{\partial}{\partial \xi}
  \end{equation*}
and for $\lambda < a$ let $\gamma(t, \lambda)$ be the integral curve of $v$ with initial point,
$\gamma(0, \lambda)$, lying on the $x$-axis and $H(\gamma(0, \lambda)) = \lambda$. Then, since $L_vH=0$,
$H(\gamma(t, \lambda))=\lambda$ for all $t$. Let $T(\lambda)$ be the time required for this curve to return to its initial
point, i.e.
  \begin{equation*}
  \gamma(t, \lambda) \ne \gamma(0, \lambda), \; \mbox{for\ } 0<t<T(\lambda)
  \end{equation*}
and
  \begin{equation*}
  \gamma(T(\lambda), \lambda) = \gamma(0, \lambda).
  \end{equation*}
\begin{proposition} [The area-period relation]
\label{apr}
Let $A(\lambda)$ be the area of the set $\{x, \xi\ |\ H(x, \xi)<\lambda\}$. Then
  \begin{equation}
  \label{ap}
  \frac d{d\lambda} A(\lambda) = T(\lambda).
  \end{equation}
\end{proposition}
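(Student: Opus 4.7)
The plan is to prove this via the coarea formula, using the fact that for a Hamiltonian vector field the speed along an orbit equals $|\nabla H|$.

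First I would note that for $\lambda \in (0, a)$ the level set $\{H = \lambda\}$ is a smooth compact curve: the only place $\nabla H = (V'(x), \xi)$ vanishes is at $(0,0)$, where $H = 0$, and this is excluded by our assumption $V'' > 0$ on $[c,d]$ together with $V(0)=0$. So I can apply the coarea formula to $H$, writing
\begin{equation*}
A(\lambda) = \iint_{H < \lambda} dx\, d\xi = \int_0^\lambda \left( \int_{H = \mu} \frac{d\sigma}{|\nabla H|} \right) d\mu,
\end{equation*}
where $d\sigma$ is arc length on the level curve. Differentiating in $\lambda$ gives
\begin{equation*}
\frac{dA}{d\lambda}(\lambda) = \int_{H = \lambda} \frac{d\sigma}{|\nabla H|}.
\end{equation*}

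Next I would parametrize the level curve $\{H = \lambda\}$ by the Hamiltonian flow, i.e.\ by the orbit $\gamma(t, \lambda)$ with $t \in [0, T(\lambda)]$. Since $\gamma$ is an integral curve of $v_H$, its speed is
\begin{equation*}
|\dot\gamma(t)| = |v_H| = \sqrt{\bigl(\partial H/\partial x\bigr)^2 + \bigl(\partial H/\partial \xi\bigr)^2} = |\nabla H|,
\end{equation*}
so arc length along the curve satisfies $d\sigma = |\nabla H|\, dt$. Substituting,
\begin{equation*}
\frac{dA}{d\lambda}(\lambda) = \int_0^{T(\lambda)} \frac{|\nabla H|\, dt}{|\nabla H|} = T(\lambda),
\end{equation*}
which is the desired identity.

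There isn't really a hard step here; the main thing to be careful about is justifying the regularity of the level set $\{H = \lambda\}$ so that the coarea formula applies and the period $T(\lambda)$ is well defined and finite. As a sanity check, one could redo the computation via the action integral: by Stokes' theorem $A(\lambda) = \oint_{H=\lambda} \xi\, dx = 2\int_{x_-(\lambda)}^{x_+(\lambda)} \sqrt{2(\lambda - V(x))}\, dx$, and differentiating under the integral sign (the boundary contributions vanish because $\lambda - V(x_\pm(\lambda)) = 0$) yields $\int_{x_-}^{x_+} \sqrt{2}\, dx/\sqrt{\lambda - V(x)}$, which is exactly the period $T(\lambda)$ obtained by separating variables in $\dot x = \xi = \pm\sqrt{2(\lambda - V(x))}$.
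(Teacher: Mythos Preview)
Your proof is correct and reaches the same endpoint as the paper, but by a more direct route. The paper constructs an explicit normalized gradient vector field $w$, pushes the area form forward under $\exp(tw)$, and reduces via Stokes' theorem to the boundary integral $\int_{H=\lambda}\iota(w)\,dx\,d\xi$, which it then rewrites as $\int dt$ using the Hamilton--Jacobi equations. You instead invoke the coarea formula to obtain $\int_{H=\lambda} d\sigma/|\nabla H|$ in one step, and then use the observation that $|v_H|=|\nabla H|$ (since $v_H$ is the $90^\circ$ rotation of $\nabla H$) to identify this with the period. Your approach is cleaner and packages the analysis into a standard named theorem; the paper's approach is more self-contained, not relying on coarea as a black box, and makes the flow geometry explicit. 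Your closing sanity check via the action integral $A(\lambda)=\oint \xi\,dx$ is in fact a third independent proof, and is the version most often seen in classical mechanics texts.
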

\begin{proof}
Let $w$ be the gradient vector field
  \begin{equation*}
  \left( (\frac{\partial H}{\partial x})^2 + (\frac{\partial H}{\partial \xi})^2 \right)^{-1}
  \left(\frac{\partial H}{\partial x}\frac{\partial}{\partial x} + \frac{\partial H}{\partial \xi} \frac{\partial}{\partial \xi}\right)
  \rho(H)
  \end{equation*}
where $\rho(t)=0$ for $t < \frac{\varepsilon}2$ and $\rho(t)=1$ for $t>\varepsilon$. Then for
$\lambda>\varepsilon$ and $t$ positive, $\exp(tw)$ maps the set $H=\lambda$ onto the set $H=\lambda+t$ and
hence
  \begin{equation*}
  A(\lambda+t) = \int_{H=\lambda+t}\ dx\ d\xi = \int_{H=\lambda} (\exp{tw})^*\ dx\ d\xi.
  \end{equation*}
So for $t=0$,
  \begin{equation*}
  \frac d{dt}A(\lambda+t) = \int_{H \le \lambda} L_w\ dx\ d\xi = \int_{H \le \lambda} d\iota(w) dxd\xi = \int_{H=\lambda} \iota(w) dxd\xi.
  \end{equation*}
But on $H=\lambda$,
  \begin{equation*}
  \iota(w)dxd\xi =  \left( (\frac{\partial H}{\partial x})^2 + (\frac{\partial H}{\partial \xi})^2 \right)^{-1}
  \left(\frac{\partial H}{\partial x}d\xi - \frac{\partial H}{\partial \xi}dx\right).
  \end{equation*}
Hence by the Hamilton-Jacobi equations
  \begin{equation*}
  dx = \frac{\partial H}{\partial \xi}\ dt
  \end{equation*}
and
  \begin{equation*}
  d\xi = -\frac{\partial H}{\partial x}\ dt,
  \end{equation*}
the right hand side is just $-dt$. So
  \begin{equation*}
  \frac{dA}{d\lambda}(\lambda) = -\int_{H=\lambda}\ dt = T(\lambda).
  \end{equation*}
\end{proof}

For $\lambda=a$, let $c=\frac{A(\lambda)}{2\pi}$ and let
  \begin{equation*}
  H^0_{HB}: [0,c] \to [0, a]
  \end{equation*}
be the function defined by the identities
  \begin{equation*}
  H^0_{HB}(s) = \lambda \; \Longleftrightarrow s=\frac{A(\lambda)}{2\pi}
  \end{equation*}
and let
  \begin{equation*}
  H_{CB}(x, \xi) := H^0_{QB}(\frac{x^2+\xi^2}2).
  \end{equation*}
Thus by definition
  \begin{equation}
  \label{acb}
  A_{CB}(\lambda) = \mbox{area} \{H_{CB}<\lambda\} = A(\lambda).
  \end{equation}
Now let $v$ be the Hamiltonian vector field associated with the Hamiltonian, $H$,
as above and $v_{CB}$ the corresponding vector field for $H_{CB}$. Also as above
let $\gamma(t, \lambda)$ be the integral curve of $v$ on the level set, $H=\lambda$,
with initial point on the $x$-axis, and let $\gamma_{CB}(t, \lambda)$ be the analogous
integral curve of $v_{CB}$. We will define a map of the set $H<a$ onto the set $H_{CB}<a$
by requiring
  \begin{equation}
  \label{fhcb}
  \aligned
  & \mathrm{i.\ } f^* H_{CB} = H, \\
  & \mathrm{ii.\ } f \mbox{\ maps the\ }x\mbox{-axis into itself},\qquad \qquad \qquad\indent\indent\indent\indent\indent\indent \\
  & \mathrm{iii.\ } f(\gamma(t, \lambda)) = \gamma_{CB}(t, \lambda).
  \endaligned
  \end{equation}
Notice that this mapping is well defined by proposition \ref{apr}. Namely by the identity (\ref{acb})
and the area-period relation, the time it takes for the trajectory $\gamma(t, \lambda)$ to circumnavigate
this level set $H=\lambda$ coincides with the time it takes for $\gamma_{CB}(t, \lambda)$ to circumnavigate the level
set $H_{CB}=\lambda$. It's also clear that the mapping defined by (\ref{fhcb}), $\mathrm{i}-\mathrm{iii}$, is a smooth mapping
except perhaps at the origin and in fact since it satisfies $f^*H_{BC}=H$ and $f_*v_H = v_{H_{CB}}$, is a
symplectomorphism. We claim that it is a $C^\infty$ symplectomorphism at the origin as well. This slightly non-trivial
fact follows from the classical Birkhoff canonical form theorem for the \emph{Taylor} series of $f$ at the
origin. (The proof of which is basically just a formal power series version of the proof above. See \cite{GPU}, \S 3
for details.)

Now let $\mathcal U_0: C_0^\infty(\RR) \to C^\infty(\RR)$ be a semi-classical Fourier integral operator quantizing
$f$ with the property (\ref{uA}). By Egorov's theorem $\mathcal U_0 S_\hb \mathcal U_0^t$ is a zeroth order
semi-classical pseudodifferential operator with leading symbol $H^0_{QB}(\frac{x^2+\xi^2}2)$ on the
set $\{(x, \xi)\ |\ H^0_{QB}<a\}$, and hence the operator
  \begin{equation*}
  \mathcal U_0 S_\hb \mathcal U_0^t - H^0(S_\hb^{har})
  \end{equation*}
is semi-classical pseudodifferential operator on this set with leading symbol of order $\hb^2$. We'll show
next that this $O(\hb^2)$ can be improved to an $O(\hb^4)$. To do so, however, we'll need the following lemma:
\begin{lemma}
\label{lemma}
Let $g$ be a $C^\infty$ function on the set $H^{-1}(0,a)$. Then there exists a $C^\infty$
function, $h$, on this set and a function $\rho \in C^\infty(0,a)$ such that
  \begin{equation}
  \label{g}
  g=L_v h + \rho(H).
  \end{equation}
\end{lemma}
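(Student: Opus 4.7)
The plan is to solve the cohomological equation $g = L_v h + \rho(H)$ by passing to ``action--angle'' coordinates on the open annulus $H^{-1}((0,a))$. Since the only critical point of $H$ in $H^{-1}([0,a])$ is the origin, $v$ does not vanish on this annulus; every trajectory is periodic of period $T(\lambda)$ (smooth in $\lambda$ by Proposition~\ref{apr}), and the positive $x$-axis meets each energy surface in a unique point, giving a smooth global cross-section. Consequently the map $(t,\lambda)\mapsto \gamma(t,\lambda)$ will descend to a diffeomorphism from the cylinder $\{(t,\lambda):\lambda\in(0,a)\}/(t\sim t+T(\lambda))$ onto $H^{-1}((0,a))$, and in these coordinates $L_v = \partial/\partial t$.

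I will then define $\rho$ to be the orbit average
\[
\rho(\lambda) \;=\; \frac{1}{T(\lambda)}\int_0^{T(\lambda)} g(\gamma(s,\lambda))\,ds,
\]
which lies in $C^\infty(0,a)$ by smooth dependence of $\gamma$ and $T$ on $\lambda$. The candidate $h$ will be obtained from
\[
\tilde h(t,\lambda) \;=\; \int_0^t \bigl(g(\gamma(s,\lambda))-\rho(\lambda)\bigr)\,ds,
\]
which manifestly satisfies $\partial_t\tilde h = g\circ\gamma - \rho(\lambda)$. The choice of $\rho$ makes the integrand have mean zero over a period, so a change of variables combined with periodicity of $s\mapsto g(\gamma(s,\lambda))$ gives $\tilde h(t+T(\lambda),\lambda)=\tilde h(t,\lambda)$. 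Hence $\tilde h$ descends to a function $h$ on the annulus, and pushing the identity $\partial_t\tilde h = g\circ\gamma-\rho$ back to $H^{-1}((0,a))$ yields $L_v h = g - \rho(H)$.

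The only delicate point is verifying that $h$ is $C^\infty$ on all of $H^{-1}((0,a))$, and not merely smooth when pulled back to the universal cover $\RR\times(0,a)$. This will be automatic once one observes that $\tilde h$ is literally $T(\lambda)$-periodic in $t$ as a function on $\RR\times(0,a)$, so it factors through the smooth quotient (action--angle) manifold; the averaging definition of $\rho$ is precisely what enforces this periodicity, which is why both summands $L_v h$ and $\rho(H)$ must appear together in the decomposition. I expect this smooth descent---or equivalently, the need to solve both the ``average'' and the ``oscillatory'' part of the equation simultaneously---to be the main, but ultimately mild, obstacle.
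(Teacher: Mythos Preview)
Your proposal is correct and follows essentially the same approach as the paper: define $\rho$ as the orbit average of $g$, then set $h$ to be the time-integral of $g-\rho(H)$ along trajectories, using periodicity to descend to the annulus. Your write-up is in fact more careful than the paper's, which omits the $1/T(\lambda)$ normalization in the definition of $\rho$ (evidently a typo, since otherwise $\int_0^{T(\lambda)} g_1\,dt$ would not vanish) and does not explicitly address the smooth-descent issue you flag.
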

\begin{proof}
Let
  \begin{equation*}
  \rho(\lambda)  = \int_0^{T(\lambda)} g(\gamma(t, \lambda))\ dt
  \end{equation*}
and let $g_1 = g - \rho(H)$. Then
  \begin{equation*}
  \int_0^{T(\lambda)} g_1(\gamma(t, \lambda))\ dt = 0.
  \end{equation*}
So one obtains a function $h$ satisfying (\ref{g}) by setting
  \begin{equation*}
  h(\gamma(t, \lambda)) = \int_0^t g_1(\gamma(t, \lambda))\ dt.
  \end{equation*}
\end{proof}
\begin{remark*}
The identity (\ref{g}) can be rewritten as
  \begin{equation}
  \label{g2}
  g=\{H, h\} + \rho(H).
  \end{equation}
\end{remark*}

Now let $-\hb^2 g$ be the leading symbol of
  \begin{equation*}
  S_\hb - \mathcal U_0^t H^0_{QB}(S_\hb^{har}) \mathcal U_0 =: \hb^2 R_0.
  \end{equation*}
Then if $h$ and $\rho$ are the functions (\ref{g}) and $Q$ is a self adjoint
pseudodifferential operator with leading symbol $h$ one has, by (\ref{g2}),
  \begin{equation*}
  \aligned
  \exp(i\hb^2Q)S_\hb\exp(-i\hb^2Q) & = S_\hb + i [Q, S_\hb] \hb^2 + O(\hb^4) \\
  & = S_\hb - \hb^2 (R_0+\rho(S_\hb)) + O(\hb^4).
  \endaligned
  \end{equation*}
Hence if we replace $\mathcal U_0$ by $\mathcal U_1=\mathcal U_0 \exp(i\hb^2 Q)$ we have
  \begin{equation}
  \label{u1s}
  \aligned
  \mathcal U_1 S_\hb \mathcal U_1^t & = H^0_{QB}(S_\hb^{har}) + \hb^2 \rho\left(
  H^0_{QB}(S_\hb^{har})\right) + O(\hb^4) \\
  & = H^0_{QB}(S_\hb^{har}) + H^1_{QB}(S_\hb^{har}) + O(\hb^4)
  \endaligned
  \end{equation}
microlocally on the set $H^{-1}(0,a)$.

As above there's an issue of whether (\ref{u1s}) holds microlocally at the origin as well, or
alternatively: whether, for the $g$ above, the solutions $h$ and $\rho$ of (\ref{g2}) extend
smoothly over $x=\xi=0$. This, however, follows as above from known facts about Birkhoff canonical forms in a formal
neighborhood of a critical point of the Hamiltonian, $H$.

To summarize what we've proved above: ``Quantum Birkhoff modulo $\hb^2$" implies ``Quantum Birkhoff modulo $\hb^4$".
The inductive step, ``Quantum Birkhoff modulo $\hb^{2k}$" implies ``Quantum Birkhoff modulo $\hb^{2k+2}$" is proved in
exactly the same way. We will omit the details.


\section{Birkhoff Canonical Forms and Spectral Measures}

Let $g(s)$ be a $C_0^\infty$ function on the interval $(0, \infty)$. Then by the Euler-Maclaurin
formula
  \begin{equation*}
  \sum_{n=0}^\infty g\left(\hb (n+\frac 12)\right) = \int_0^\infty g(s)\ ds + O(\hb^\infty).
  \end{equation*}
Hence for $f \in C_0^\infty(0, a)$,
  \begin{equation*}
  \trace f(H_{QB}(S_\hb^{har}, \hb)) = \int_0^\infty f(H_{QB}(s, \hb))\ ds + O(\hb^\infty).
  \end{equation*}
Thus by (\ref{uf}) and (\ref{uA}),
  \begin{equation}
  \label{nuhf}
  \nu_\hb(f) = \trace f(S_\hb) = \int_0^\infty f(H_{QB}(s, \hb))\ ds + O(\hb^\infty).
  \end{equation}
Thus if $K(t, \hb)$ is the inverse of the function $H_{QB}(s, \hb)$ on the interval $0<t<a$, i.e.
for $0<t<a$,
  \begin{equation*}
  K(t, \hb)=s \; \Longleftrightarrow H_{QB}(s, \hb)=t,
  \end{equation*}
then (\ref{nuhf}) can be rewritten as
  \begin{equation}
  \label{nuhf2}
  \nu_\hb(f) = \int_0^a f(t)\frac{dK}{dt}\ dt + O(\hb^\infty),
  \end{equation}
or more succinctly as
  \begin{equation}
  \label{nuh2}
  \nu_\hb = \frac{dK}{dt}\ dt.
  \end{equation}
Hence in view of the results of \S 6 this gives one an easy way to recover $H_{QB}(s, \hb)$ from $V$
and its derivatives via fractional integration.


\section{Semiclassical Spectral Invariants for Schr\"odinger Operators with Magnetic Fields}

In this section we will show how the results in \S 3 can be extended to Schr\"odinger operators with magnetic fields.
Recall that a semi-classical Schr\"odinger operator with magnetic field on $\mathbb R^n$ has the form
  \begin{equation}
  \label{sch_m}
  S^m_\hb := \frac 12 \sum_j (\frac {\hb}i\frac{\partial}{\partial x_j} + a_j(x))^2 + V(x)
  \end{equation}
where $a_k \in C^\infty(\mathbb R^n)$ are smooth functions defining a magnetic field $B$, which, in dimension 3 is given by $\vec B = \vec \nabla \times \vec a$, and in arbitrary dimension by the 2-form $B=d(\sum a_kdx_k)$. We will assume that the vector potential $\vec a$ satisfies the Coulomb gauge condition,
  \begin{equation}
  \label{cou}
  \nabla \cdot \vec{a}  = \sum_j \frac{\partial a_j}{\partial x_j} = 0.
  \end{equation}
(In view of the definition of $B$, one can always choose such a Coulomb vector potential.) In this case, the Kohn-Nirenberg symbol of the operator (\ref{sch_m}) is given by
  \begin{equation}
  \label{syl_sch_m}
  p(x, \xi, \hb) = \frac 12\sum_j (\xi_j+a_j(x))^2+V(x).
  \end{equation}
Recall that
  \begin{equation}
  \label{QalphaSchr_m}
  Q_\alpha = \frac 1{\alpha !}\prod_k \left(\frac{\partial}{\partial {x_k}} + i t\frac{\partial p}{\partial x_k}\right)^{\alpha_k},
  \end{equation}
so the iteration formula (\ref{bm}) becomes
  \begin{equation}
  \label{ite_s_m}
  \frac 1i \frac{\partial b_m}{\partial t} =  \sum_k \frac 1i \frac{\partial p}{\partial \xi_k}(\frac{\partial}{\partial x_k}
  + it\frac{\partial p}{\partial x_k})b_{m-1}
   - \frac 12 \sum_k \left(\frac{\partial}{\partial x_k} +i t \frac{\partial p}{\partial x_k}
  \right)^2 b_{m-2}.
  \end{equation}
from which it is easy to see that
  \begin{equation}
  \label{b1_s_m}
  b_1(x, \xi, t) = \sum_k  \frac{\partial p}{\partial \xi_k}\frac{\partial p}{\partial x_k} \frac{it^2}{2}.
  \end{equation}
Thus the ``first" spectral invariant is
  \begin{equation*}
  \int \sum_k (\xi_k +a_k(x))\frac{\partial p}{\partial x_k} f^{(2)}(p)\  dxd\xi
   = - \int \sum_k \frac{\partial a_k}{\partial x_k} f'(p) dx d\xi = 0,
  \end{equation*}
where we used the fact $\sum \frac{\partial a_k}{\partial x_k} = 0$.

With a little more effort we get for the next term
  \begin{equation*}
  \aligned
  b_2(x, \xi, t)
  = & \frac{t^2}4 \sum_k \frac{\partial^2 p}{\partial x_k^2}  \\
  & + \frac{i t^3}6 \left(\sum_{k,l }\frac{\partial p}{\partial \xi_k}\frac{\partial a_l}{\partial x_k}\frac{\partial p}{\partial x_l}  +  \sum_{k,l} \frac{\partial p}{\partial \xi_k} \frac{\partial p}{\partial \xi_l} \frac{\partial^2 p}{\partial x_k \partial x_l} + \sum_k (\frac{\partial p}{\partial x_k})^2 \right) \\
  & + \frac{-t^4}{8}  \sum_{k,l} \frac{\partial p}{\partial \xi_k}\frac{\partial p}{\partial x_k}\frac{\partial p}{\partial \xi_l}\frac{\partial p}{\partial x_l}.
  \endaligned
  \end{equation*}
and, by integration by parts, the spectral invariant
  \begin{equation}
  \label{sp_s_m}
  I_\lambda= -\frac 1{24} \int \left(\sum_k \frac{\partial^2 p}{\partial x_k^2} - \sum_{k,l} \frac{\partial a_k}{\partial x_l} \frac{\partial a_l}{\partial x_k}
  \right) f^{(2)}(p(x, \xi)) dxd\xi.
  \end{equation}
Notice that
  \begin{equation*}
  \frac{\partial^2 p}{\partial x_k^2} = \sum_j \frac{\partial^2 a_j}{\partial x_k^2}\frac{\partial p}{\partial \xi_j} + \sum_j (\frac{\partial a_j}{\partial x_k})^2 + \frac{\partial^2 V}{\partial x_k^2}
  \end{equation*}
and
  \begin{equation*}
  \|B\|^2 = \mathrm{tr}B^2 = 2\sum_{j,k} \frac{\partial a_k}{\partial x_j} \frac{\partial a_j}{\partial x_k} - 2\sum_{j,k}(\frac{\partial a_k}{\partial x_j})^2
  \end{equation*}
So the subprincipal term is given by
  \begin{equation*}
  \frac{1}{48} \int f^{(2)}(p(x, \xi)) \left(\|B\|^2 - 2 \sum_k \frac{\partial^2 V}{\partial x_k^2}\right)\ dx\ d\xi.
  \end{equation*}
Finally Since the spectral invariants have to be gauge invariant by definition, and since any magnetic field has by gauge change a coulomb vector potential representation, the integral
\[
  \int_{p<\lambda} \left(\|B\|^2 - 2 \sum_k \frac{\partial^2 V}{\partial x_k^2}\right)\ dx\ d\xi
\]
is spectrally determined for an arbitrary vector potential. Thus we proved
\begin{theorem}
For the semiclassical Schr\"odinger operator (\ref{sch_m}) with magnetic field $B$, the spectral measure
$
\nu(f) = \mbox{trace}f(S_{\hb}^m)
$
for $f \in C^\infty_0(\mathbb R)$ has an asymptotic expansion
\[
\nu^m(f) \sim (2\pi\hb)^{-n} \sum \nu_r^m(f) \hb^{2r},
\]
where
\[
  \label{nu0m}
\nu_0^m(f) = \int f(p(x, \xi, \hb)) dxd\xi
\]
and
\[
\label{nu1m}
\nu_1^m(f) = \frac 1{48} \int f^{(2)}(p(x, \xi, \hb)) (\|B\|^2 - 2 \sum \frac{\partial^2 V}{\partial x_i^2}).
\]
\end{theorem}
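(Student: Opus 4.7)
The plan is to invoke the general semi-classical trace formula of Theorem 2.1 applied to $S^m_\hbar$, whose Kohn-Nirenberg symbol $p(x,\xi) = \frac{1}{2}\sum_j(\xi_j+a_j(x))^2+V(x)$ is $\hbar$-independent. This reduces $\trace f(S^m_\hbar)$ to $\hbar^{-n}\int b_f(x,\xi,\hbar)\,dxd\xi + O(\hbar^\infty)$ where, by (\ref{bf}), $b_f \sim \sum_k\hbar^k\sum_{l\le 2k} b_{k,l}(x,\xi)f^{(l)}(p(x,\xi))$ and the $b_{k,l}$ are the time-polynomial coefficients of the solutions $b_k(x,\xi,t)$ of the iteration (\ref{ite_s_m}). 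The principal term $\nu_0^m(f) = \int f(p)\,dxd\xi$ is read off immediately from $b_0\equiv 1$, which handles the first displayed formula of the theorem.

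The next step is to show that the odd-$k$ contributions vanish, so the raw $\hbar$-expansion is genuinely an $\hbar^2$-expansion. Solving (\ref{ite_s_m}) for $b_1$ gives (\ref{b1_s_m}), and its contribution to the trace is $\tfrac{1}{2}\int \sum_k(\xi_k+a_k)(\partial p/\partial x_k)f^{(2)}(p)\,dxd\xi$; a single integration by parts in $x_k$ via the magnetic analogue of (\ref{IBP1}) collapses this to $-\int\sum_k(\partial a_k/\partial x_k)f'(p)\,dxd\xi$, which vanishes identically by the Coulomb condition (\ref{cou}). I would then argue inductively, using the parity structure built into the recursion (\ref{ite_s_m}) together with repeated application of $\nabla\cdot\vec a=0$ after integration by parts, that every odd-$k$ contribution vanishes in the same fashion. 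This parity/gauge verification is the main conceptual obstacle, since it is what promotes the generic $\hbar$-expansion to an even-power expansion $\sum_r\nu_r^m(f)\hbar^{2r}$.

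For $\nu_1^m$ I would compute $b_2$ explicitly by one further iteration of (\ref{ite_s_m}), obtaining the expression displayed just above (\ref{sp_s_m}). Substituting into (\ref{SemTr}) and applying (\ref{IBP1}), (\ref{IBP2}) --- the former to eliminate $x$-derivatives, the latter to clear the $\xi_k\xi_l$ factors --- yields the compact form $I_\lambda$ of (\ref{sp_s_m}). The rest is algebra: expand $\partial^2 p/\partial x_k^2 = \sum_j(\partial^2 a_j/\partial x_k^2)(\xi_j+a_j) + \sum_j(\partial a_j/\partial x_k)^2 + \partial^2 V/\partial x_k^2$; the $(\xi_j+a_j)$-linear term integrates by parts to a divergence of $\vec a$ and vanishes by (\ref{cou}); the remaining quadratic-in-$\partial a$ terms combine with $-\sum_{k,l}(\partial a_k/\partial x_l)(\partial a_l/\partial x_k)$ to reassemble into $-\tfrac{1}{2}\|B\|^2$ via the identity for $\|B\|^2 = \tr B^2$ recorded in the text. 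Tracking the numerical coefficients produces $\nu_1^m(f) = \tfrac{1}{48}\int f^{(2)}(p)(\|B\|^2 - 2\sum_k \partial^2 V/\partial x_k^2)\,dxd\xi$, as claimed.

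Finally, the formula extends from the Coulomb gauge to an arbitrary vector potential by gauge invariance: a gauge change $\vec a \mapsto \vec a + \nabla\varphi$ conjugates $S^m_\hbar$ by the unitary $e^{-i\varphi/\hbar}$, leaving the spectrum --- hence $\nu^m(f)$ --- unchanged, and every magnetic field admits a Coulomb representative by Hodge/Helmholtz decomposition. Because the final expression is written intrinsically in terms of $B$ and $V$, it is automatically valid for every gauge representative. The only genuine obstacle in the whole argument is bookkeeping: isolating which terms cancel under $\nabla\cdot\vec a=0$ and which reorganize into the gauge-invariant quantity $\|B\|^2$; no analytic input beyond Theorem 2.1 is required.
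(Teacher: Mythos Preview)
Your proposal is correct and follows essentially the same route as the paper: apply Theorem~2.1 to $S^m_\hbar$ in Coulomb gauge, compute $b_1$ and $b_2$ from the recursion (\ref{ite_s_m}), kill the $b_1$ contribution by integration by parts and $\nabla\cdot\vec a=0$, reduce the $b_2$ integral via (\ref{IBP1})--(\ref{IBP2}) to the form (\ref{sp_s_m}), then algebraically reorganize $\sum_k\partial^2 p/\partial x_k^2 - \sum_{k,l}(\partial a_k/\partial x_l)(\partial a_l/\partial x_k)$ into $\|B\|^2 - 2\sum_k\partial^2 V/\partial x_k^2$ and invoke gauge invariance to drop the Coulomb hypothesis. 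The only place you go beyond the paper is in explicitly flagging the vanishing of all odd-$\hbar$ contributions; the paper verifies this only for $k=1$ and otherwise takes the $\hbar^2$-structure for granted, so be aware that your ``parity structure'' sketch is doing slightly more work than the paper actually supplies (and that the naive $\xi\mapsto -\xi$ parity of \S\ref{sec:3} is broken here by the $a_k$'s, so the clean justification is rather that $S^m_{-\hbar}$ is complex-conjugate to $S^m_\hbar$, forcing $\trace f(S^m_\hbar)$ to be even in $\hbar$).
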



\section{A Inverse Result for The Schr\"odinger Operator with A Magnetic Field}

Making the change of coordinates $(x, \xi) \to (x, \xi+a(x))$, the expressions (\ref{nu0m}) and (\ref{nu1m}) simplify to
\[
\nu_0^m(f) = \int f(\xi^2+V) dxd\xi
\]
and
\[
\label{nu1m}
\nu_1^m(f) = \frac 1{48} \int f^{(2)}(\xi^2+V) (\|B\|^2 - 2 \sum \frac{\partial^2 V}{\partial x_i^2}) dxd\xi.
\]
In other words, for all $\lambda$, the integrals
\[
\mathrm{I}_\lambda = \int_{\xi^2+V(x) < \lambda} dxd\xi
\]
and
\[
\mathrm{I\!I}_\lambda = \int_{\xi^2+V(x) < \lambda} (\|B\|^2 -  2 \sum \frac{\partial^2 V}{\partial x_i^2}) dxd\xi
\]
are spectrally determined.

Now assume that the dimension is 2, so that the magnetic field $B$ is actually a scalar $B = B dx_1 \wedge dx_2$. Moreover, assume that $V$ is a radially symmetric potential well, and the magnetic field $B$ is also radially symmetric. Introducing polar coordinates
\[
\aligned
& x_1^2+x_2^2 = s, \; dx_1 \wedge dx_2 = \frac 12 ds \wedge d\theta \\
& \xi_1^2+\xi_2^2 = t, \; d\xi_1 \wedge d\xi_2 = \frac 12 dt \wedge d\psi
\endaligned
\]
we can rewrite the integral $I_\lambda$ as
\[
\mathrm{I}_\lambda = \pi^2 \int_0^{s{\scriptsize (\lambda)}} (\lambda - V(s))ds,
\]
where $V(s(\lambda))=\lambda$. Making the coordinate change $V(s)=x \Leftrightarrow s=f(x)$ as before, we get
\[
\mathrm{I}_\lambda = \pi^2 \int_0^\lambda (\lambda-x) \frac{df}{dx}dx.
\]
A similar argument shows
\[
\mathrm{I\!I}_\lambda = \pi^2 \int_0^\lambda (\lambda-x) H(f(x)) \frac{df}{dx} dx,
\]
where
\[
H(s) = B(s)^2 - 4s V''(s) - 2 V'(s).
\]
It follows that from the spectral data, we can determine
\[
f'(\lambda) = \frac{1}{\pi^2}\frac{d^2}{d\lambda^2}\mathrm{I}_\lambda
\]
and
\[
H(f(\lambda)) f'(\lambda) = \frac{1}{\pi^2} \frac{d^2}{d\lambda^2}\mathrm{I\!I}_\lambda.
\]
So if we normalize $V(0)=0$ as before, we can recover $V$ from the first equation and $B$ from the second equation.

\begin{remark}
In higher dimensions, one can show by a similar (but slightly more complicated)
argument that $V$ and $\|B\|$ are both spectrally determined if they are radially symmetric.
\end{remark}

\appendix
\section*{Appendix A: More Spectral Invariants in 1-dimension}
\setcounter{equation}{0}
\renewcommand{\thesection}{A}

For simplicity we will only consider the dimension one case. One
can solve the equation (\ref{bm}) for the Schr\"odinger operator
with initial conditions (\ref{b0}) and (\ref{bm0}) inductively,
and get in general
  \begin{equation}
  \label{b2m}
  b_{2m}(x, \xi, t) = \sum_{k=m+1}^{4m} t^k \sum_{n+t=k-m, n \le m \atop l_1+\cdots+l_t=2m}
  \xi^{2n} V^{(l_1)} \cdots V^{(l_t)} a_{n,l},
  \end{equation}
and
  \begin{equation}
  \label{b2m-1}
  b_{2m-1}(x, \xi, t) = \sum_{k=m+1}^{4m-2} t^k \sum_{n+t=k-m, n \le m-1 \atop l_1+\cdots+l_t=2m-1}
  \xi^{2n+1} V^{(l_1)} \cdots V^{(l_t)} \tilde a_{n,l},
  \end{equation}
where $a_{n, l}$ and $\tilde a_{n,l}$ are constants depending on
$n$ and $l_1, \cdots, l_t$. In particular,
  \begin{equation}
  \label{b3}
  \aligned
  b_3(x, \xi, t) = & \frac{t^3}6 \xi V^{(3)}(x) + \frac {t^4}3 i\xi
  \left(V'(x) V''(x)+\frac 18 \xi^2 V^{(3)}\right) \\ & -
  \frac{t^5}{12} \xi \left(
  V'(x)^3 + \xi^2 V'(x)V''(x) \right) - \frac{t^6}{48}i\xi^3V'(x)^3,
  \endaligned
  \end{equation}
and
\begin{equation}
  \label{b4}
  \aligned
  b_4(x, \xi, t) = &  -\frac{t^3}{24} i V^{(4)}(x) + t^4 \left(
  \frac{7}{96} V''(x)^2 + \frac 5{48}V'(x)V^{(3)}(x) + \frac 1{16}
  \xi^2 V^{(4)}(x)\right) \\
  &+ t^5 \left( \frac{13}{120} i V'(x)^2V''(x) + \frac{13}{120}
  i \xi^2 V''(x)^2 + \frac{19}{120}i \xi^2 V'(x)V^{(3)}(x) + \frac 1
  {120}i \xi^4 V^{(4)}(x)\right) \\
  & + t^6 \left( -\frac 1{72} V'(x)^4 - \frac{47}{288} \xi^2 V'(x)^2
  V''(x) - \frac 1{72} \xi^4 V''(x)^2 - \frac 1{48} \xi^4 V'(x)V^{(3)}(x)
  \right) \\
  & - \frac{t^7}{48} \left( i\xi^2 V'(x)^4 + i \xi^4 V'(x)^2V''(x)
  \right) + \frac {t^8}{384} \xi^4 V'(x)^4.
  \endaligned
  \end{equation}
The order $\hb^k$ term is given by integrating the above formula
with $t^k$ replaced by $\frac 1{i^k} f^{(k)}(\frac{\xi^2}2+V(x))$.
By integration by parts
  \begin{equation*}
  \int \xi^{2k}A(x)B^{(k)}(\frac{\xi^2}2+V(x))\ dxd\xi = (-1)^k
  (2k-1)!! \int A(x)B(\frac{\xi^2}2 +V(x))\ dxd\xi,
  \end{equation*}
so we can simplify the integral to
  \begin{equation*}
  \int \left(\frac {V^{(4)}f^{(3)}}{240} + \frac {(V'')^2f^{(4)}}{160}  +
  \frac {V'V'''f^{(4)}}{120}  + \frac {11 (V')^2V''f^{(5)}}{1440} +
  \frac {(V')^4f^{(6)}}{1152}\right) dxd\xi,
  \end{equation*}
Notice that
  \begin{equation*}
  \int V^{(4)}f^{(3)}  = - \int   V'V'''f^{(4)}
   = \int V''V''f^{(4)} + V''V'V'f^{(5)}
  \end{equation*}
and
  \begin{equation*}
  \int V'V'V''f^{(5)} = - \int \left(2 V'V'V''f^{(5)} +
  V'V'V'V'f^{(6)}\right),
  \end{equation*}
we can finally simplify the integral to
  \begin{equation*}
  \int \left(\frac 1{480}(V''(x))^2f^{(4)}(\frac{\xi^2}2+V(x)) +
  \frac 7{3456}(V'(x))^4f^{(6)}(\frac{\xi^2}2+V(x)) \right)\
  dxd\xi,
  \end{equation*}
or
  \begin{equation*}
  \frac 1{288}\int \left(\frac {\xi^4}{5}(V''(x))^2 +
  \frac 7{12}(V'(x))^4 \right) f^{(6)}(\frac{\xi^2}2+V(x)) \
  dxd\xi,
  \end{equation*}
This can also be written in a more compact form as
  \begin{equation*}
  \frac 1{1152}\int (7V'V'''+\frac{47}5 (V''(x))^2) f^{(4)}(\frac{\xi^2}2+V(x)) \
  dxd\xi.
  \end{equation*}
It follows that
  \begin{equation*}
  \int_{\frac{\xi^2}2+V(x) \le \lambda} (7V'V'''+\frac{47}5 (V''(x))^2)
  dxd\xi,
  \end{equation*}
is spectrally determined.

A similar but much more lengthy computation yields the coefficient of
$\hb^6$, which is given by
  \begin{equation*}
  \frac 1{2880}\!\int\!\!\left(\!\frac {\xi^8}{490}(V'''\!(x))^2\!-\!
  \frac{\xi^6}{63}(V''\!(x))^3\! -\! \frac{\xi^4}{12}
  (V'(x)V''\!(x))^2\!-\!
  \frac {11}{144}(V'(x))^6\!\right)\!f^{(\!9\!)}(\frac{\xi^2}2+V(x))
  dxd\xi.
  \end{equation*}
In other words, the integral
  \begin{equation*}
  \int_{\frac{\xi^2}2+V(x)\le \lambda} \left(\frac {\xi^8}{490}(V'''(x))^2 -
  \frac{\xi^6}{63}(V''(x))^3 - \frac{\xi^4}{12}
  (V'(x)V''(x))^2 - \frac{11}{144}(V'(x))^6 \right)\ dxd\xi
  \end{equation*}
is spectrally determined.


\end{document}